\newtheorem{theorem}{Theorem}[section]
\newtheorem{lemma}[theorem]{Lemma}
\newtheorem{proposition}[theorem]{Proposition}
\theoremstyle{definition}
\newtheorem{definition}[theorem]{Definition}
\newtheorem{example}[theorem]{Example}
\theoremstyle{remark}
\numberwithin{equation}{section}
\begin{document}
\setcounter{page}{1}

\title{Quantum Error Correction with Goppa Codes from Maximal Curves: Design, Simulation, and Performance}


\author{Vahid Nourozi}

\address{The Klipsch School of Electrical and Computer Engineering, New Mexico State University,
Las Cruces, NM 88003 USA}
\email{nourozi@nmsu.edu; nourozi.v@gmail.com}

\thanks{$^{\star}$Corresponding author}





\keywords{Goppa code, Finite fields, algebraic geometry codes, quantum stabilizer codes, Maximal curve.}


\begin{abstract}

This paper characterizes Goppa codes of certain maximal curves over finite fields defined by equations of the form $y^n = x^m + x$. We investigate Algebraic Geometric and quantum stabilizer codes associated with these maximal curves and propose modifications to improve their parameters. The theoretical analysis is complemented by extensive simulation results, which validate the performance of these codes under various error rates. We provide concrete examples of the constructed codes, comparing them with known results to highlight their strengths and trade-offs. The simulation data, presented through detailed graphs and tables, offers insights into the practical behavior of these codes in noisy environments. Our findings demonstrate that while the constructed codes may not always achieve optimal minimum distances, they offer systematic construction methods and interesting parameter trade-offs that could be valuable in specific applications or for further theoretical study.
\end{abstract}
\maketitle
\section{Introduction}

Algebraic geometry has become increasingly useful in coding theory since Goppa's groundbreaking construction \cite{Goppa}. Goppa associated a code $C$ to a (projective, geometrically irreducible, non-singular, algebraic) curve $X$ defined over $\mathbb{F}_q$, the finite field with $q$ elements. This code is constructed from two divisors $D$ and $G$ on $X$, where $D$ is the sum of $n$ distinct $\mathbb{F}_q$-rational points of $X$. A key feature of this construction is that the minimum distance $d$ of $C$ satisfies:

\begin{equation*}
d \geq n - \deg(G).
\end{equation*}

This bound is particularly significant because, for arbitrary codes, no general lower bound on the minimum distance is available. The effectiveness of this bound depends on $n$ being sufficiently large. Since $n$ is upper bounded by the Hasse-Weil upper bound:

\begin{equation*}
1 + q + 2g\sqrt{q},
\end{equation*}

where $g$ is the genus of the underlying curve, there is considerable interest in studying curves with many rational points \cite{Fuhrmann, vanderGeer}.

Algebraic Geometric (AG) codes from Hermitian curves have been extensively studied \cite{Duursma, Homma1, Homma2, Homma3, Stichtenoth2, Tiersma, Yang, auta}. A family of Hermitian self-orthogonal classical codes derived from algebraic geometry codes has also been investigated \cite{Jin1, Jin2, Kim}.  Also, Vahid introduced the Goppa code from Hyperelliptic Curve \cite{aut, shiraz, N23, miss}, from plane curves given by separated polynomials \cite{code, esfahan, Nourozi2024, M23, behsep}, and he explained them in his Ph.D. dissertation in \cite{phd}. Optimization frameworks are instrumental in addressing complex challenges across disciplines, including power systems and quantum coding theory. In \cite{hamid, hamid1} utilize mixed-integer programming to explore trade-offs in resource allocation, emphasizing the balance between operational efficiency and cost in ancillary service markets. Similarly, In \cite{hamid2} introduces robust optimization techniques to address reserve deliverability under uncertainty, showcasing innovative methods to simplify computational complexity while preserving system reliability. These works demonstrate how optimization-based approaches manage trade-offs between performance metrics and constraints, a concept central to both power systems and the design of robust quantum systems.

In this paper, we focus on a specific class of curves. Let $n, m \geq 2$ be integers such that $\gcd(n, m) = 1$, $\gcd(q, n) = 1$, and $\gcd(q, m - 1) = 1$, where $q = p^s$ for $s \geq 1$. We consider the non-singular model $X$ over $\mathbb{F}_{q^2}$ of the plane affine curve:

\begin{equation}
y^n = x^m + x.
\end{equation}

Note that $X$ is the Hermitian curve over $\mathbb{F}_{q^2}$ if $n = q + 1$ and $m = q$. The genus of $X$ is given by:

\begin{equation*}
g(X) = \frac{(m - 1)(n - 1)}{2}.
\end{equation*}

In this study, we assume that $n = \frac{q + 1}{2}$ and $m = 2, 3$, or $m = p^b$ where $b$ divides $s$. Tafazolian and Torres \cite{Tafazolian} proved that under these conditions, $X$ is a maximal curve over $\mathbb{F}_{q^2}$.

\section{Algebraic Geometry Codes}

Before delving into our main results, we review some fundamental concepts of Algebraic Geometry codes.

Let $\mathbb{F}_q(X)$ and $\text{Div}_q(X)$ denote the field of $\mathbb{F}_q$-rational functions and the group of $\mathbb{F}_q$-divisors of $X$, respectively. For $f \in \mathbb{F}_q(X) \setminus \{0\}$, $\text{div}(f)$ denotes the divisor associated with $f$. For $A \in \text{Div}_q(X)$, we define the Riemann-Roch space:

\begin{equation*}
L(A) = \{f \in \mathbb{F}_q(X) \setminus \{0\} : A + \text{div}(f) \succeq 0\} \cup \{0\}.
\end{equation*}

We denote the dimension of this space by $\ell(A) := \dim_{\mathbb{F}_q}(L(A))$.

\begin{definition}
Let $P_1, \ldots, P_n$ be pairwise distinct $K$-rational points of $X$ and $D = P_1 + \cdots + P_n$. Choose a divisor $G$ on $X$ such that $\text{supp}(G) \cap \text{supp}(D) = \emptyset$. The Algebraic Geometry code (or AG code) $C_L(D, G)$ associated with the divisors $D$ and $G$ is defined as:
\begin{equation*}
C_L(D, G) := \{(x(P_1), \ldots, x(P_n)) \mid x \in L(G)\} \subseteq \mathbb{F}_q^n
\end{equation*}
\end{definition}

The minimum distance $d$ of $C_L(D, G)$ satisfies $d \geq d^* = n - \deg(G)$, where $d^*$ is called the Goppa designed minimum distance. If $\deg(G) > 2g - 2$, then by the Riemann-Roch Theorem, we have $k = \deg(G) - g + 1$ \cite{Hoeholdt}.

The dual code $C^{\perp}(D, G)$ is also an AG code with dimension $k^{\perp} = n - k$ and minimum distance $d^{\perp} \geq \deg G - 2g + 2$.

\begin{definition}
The Weierstrass semigroup $H(P)$ associated with a point $P$ is defined as:
\begin{equation*}
H(P) := \{n \in \mathbb{N}_0 \mid \exists f \in \mathbb{F}_q(X), \text{div}_{\infty}(f) = nP\} = \{\rho_0 = 0 < \rho_1 < \rho_2 < \cdots\}.
\end{equation*}
\end{definition}

For vectors $a = (a_1, \ldots, a_n)$ and $b = (b_1, \ldots, b_n)$ in $\mathbb{F}_q^n$, we define the Hermitian inner product as:
\begin{equation*}
\langle a, b \rangle_H := \sum_{i=1}^n a_i b_i^q.
\end{equation*}

\begin{definition}
For a linear code $C$ over $\mathbb{F}_q^n$, the Hermitian dual of $C$ is defined as:
\begin{equation*}
C^{\perp H} := \{v \in \mathbb{F}_q^n : \langle v, c \rangle_H = 0 \quad \forall c \in C\}.
\end{equation*}
We say $C$ is Hermitian self-orthogonal if $C \subseteq C^{\perp H}$.
\end{definition}

\section{Goppa Code Over Curve $X$}

Let $r \in \mathbb{N}$. We consider the sets:
\begin{equation*}
\mathcal{G} := X(\mathbb{F}_q), \quad \mathcal{D} := X(\mathbb{F}_{q^2}) \setminus \mathcal{G}
\end{equation*}
where $\mathcal{G}$ is the intersection of $X$ with the plane $t = 0$. We fix the $\mathbb{F}_{q^2}$ divisors:
\begin{equation*}
G := \sum_{P \in \mathcal{G}} rP \quad \text{and} \quad D := \sum_{P \in \mathcal{D}} P,
\end{equation*}
where $\deg(G) = r(q + 1)$ and $\deg(D) = q^2$.

Let $C$ be the $C_L(D, G)$ Algebraic Geometry code over $\mathbb{F}_{q^2}$ with length $n = q^2$, minimum distance $d$, and dimension $k$. The designed minimum distance of $C$ is:
\begin{equation*}
d^* = n - \deg(G) = q^2 - r(q + 1).
\end{equation*}

Before we delve into more complex constructions, let us consider a simple (albeit trivial) example of a Goppa code over $\mathbb{F}_4$. This example will serve to illustrate some basic concepts and provide a point of contrast for the more sophisticated codes we will subsequently develop.

\begin{example}\label{ex:trivial-goppa}
Let $\mathbb{F}_4 = \{0, 1, \alpha, \alpha^2\}$ be the finite field with four elements, where $\alpha$ is a primitive element satisfying $\alpha^2 + \alpha + 1 = 0$. We consider a Goppa code $C$ over $\mathbb{F}_4$ with the following parameters \cite{Nourozi}:

\begin{enumerate}
    \item Code parameters: $C$ is a $[4, 4, 1]_4$ code.
    \begin{itemize}
        \item Length: $n = 4$
        \item Dimension: $k = 4$
        \item Minimum distance: $d = 1$
    \end{itemize}
    
    \item Code Properties:
    \begin{enumerate}
        \item The code $C$ is a linear code over $\mathbb{F}_4$ with $4^4 = 256$ codewords.
        \item Every vector in $\mathbb{F}_4^4$ is a codeword of $C$.
    \end{enumerate}
\end{enumerate}

This code represents a trivial case in the construction of Goppa codes. It serves as a baseline example, highlighting the importance of careful selection of the underlying algebraic curve and divisors in constructing Goppa codes with desirable properties.

The generator matrix $G$ for this code is the $4 \times 4$ identity matrix, and the parity check matrix $H$ is empty. This means that the encoding process is trivial (each message is its own codeword), and there are no parity check equations.

For any message $m = (m_1, m_2, m_3, m_4) \in \mathbb{F}_4^4$, the encoded codeword is simply $c = m$. 

This example underscores that while Goppa codes have the potential to create powerful error-correcting codes, the choice of parameters is crucial. In subsequent sections, we will explore how more judicious choices of curves and divisors lead to codes with superior distance properties and error-correction capabilities.
\end{example}

As we can see from Example \ref{ex:trivial-goppa}, not all Goppa codes result in useful error-correcting codes. The power of the Goppa code construction lies in the careful choice of the underlying curve and divisors...

Before we delve into the specific family of curves $y^n = x^m + x$, let us consider a concrete example of a Goppa code constructed from a Hermitian curve. This example will illustrate the application of the concepts we've discussed so far and provide a foundation for understanding the more general codes we'll explore in the following sections.

\begin{example}
Let $\mathbb{F}_4 = \{0, 1, \alpha, \alpha^2\}$ be the finite field with four elements, where $\alpha$ is a primitive element satisfying $\alpha^2 + \alpha + 1 = 0$. Consider the Hermitian curve $H$ over $\mathbb{F}_4$ defined by the equation \cite{Nourozi}:

\[y^2 +y = x^3 \]

\begin{enumerate}
    \item The $\mathbb{F}_4$-rational points on this curve are:
    
    $P_1, P_2, \ldots, P_8$
    
    We also have one point at infinity, denoted as $P_\infty$.

    \item Let's construct a Goppa code using these points. We choose:
    
    $D = P_1 + P_2 + \cdots + P_8$
    $G = 3P_\infty$

    \item The Riemann-Roch space $L(G)$ is spanned by $\{1, x, y\}$. 

    \item Our code $C(D,G)$ is defined as:

    $C(D,G) = \{(f(P_1), f(P_2), \ldots, f(P_8)) \mid f \in L(G)\}$

    \item The generator matrix of this code is:

    \[G = \begin{bmatrix}
    1 & 0 & 0 & 1 & \alpha & \alpha+1 & 1 & 0 \\
    0 & 1 & 0 & 1 & 1 & 0 & \alpha+1 & \alpha \\
    0 & 0 & 1 & 1 & \alpha & \alpha & \alpha+1 & \alpha+1
    \end{bmatrix}\]

    \item The parity check matrix $H$ can be derived from the generator matrix of the dual code. It is:

    \[H = \begin{bmatrix}
    1 & 0 & 0 & 0 & 0 & \alpha+1 & \alpha+1 & 1 \\
    0 & 1 & 0 & 0 & 0 & \alpha+1 & \alpha & 0 \\
    0 & 0 & 1 & 0 & 0 & \alpha & 1 & \alpha \\
    0 & 0 & 0 & 1 & 0 & \alpha & 0 & \alpha+1 \\
    0 & 0 & 0 & 0 & 1 & 1 & 1 & 1
    \end{bmatrix}\]

    \item This gives us an $[8, 3, 5]_4$ code. The parameters can be verified as follows:
    \begin{itemize}
        \item Length $n = 8$ (number of points in $D$)
        \item Dimension $k = 3$ (dimension of $L(G)$)
        \item Minimum distance $d \geq n - \deg(G) = 8 - 3 = 5$
    \end{itemize}

    \item The dual code $C^\perp(D,G)$ has parameters $[8, 5, 3]_4$.
\end{enumerate}

This example illustrates the construction of a Goppa code from a Hermitian curve, demonstrating key concepts such as the use of divisors, Riemann-Roch spaces, and the determination of code parameters in a concrete setting.
\end{example}

\paragraph*{} This example demonstrates how we can apply the general theory of Algebraic Geometry codes to a specific curve. In the following sections, we will extend these ideas to the more general family of curves defined by $y^n = x^m + x$, exploring how varying the parameters $n$ and $m$ affects the resulting codes and their properties.

We have the following result from Stichtenoth \cite{Stichtenoth1}:

\begin{lemma}\label{lem:stichtenoth}
Let $X$ be the curve defined as above, and let $D$ and $G$ be divisors as described. Then:
\begin{equation*}
C^{\perp}(D, G) = C(D, D - G + K),
\end{equation*}
where $K = div(\eta) \in Div_q(X)$ is a canonical divisor defined by a differential $\eta$ such that $\nu_{P_i}(\eta) = -1$ and $res_{P_i}(\eta) = 1$ for each $i = 1, 2, \ldots, n$.
\end{lemma}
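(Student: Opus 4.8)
The plan is to route the proof through the residue (or differential) code $C_\Omega(D,G)$, the classical intermediary in Goppa duality, and to establish two identities whose composition yields the claim: first that the Euclidean dual of the evaluation code is the residue code, $C^{\perp}(D,G) = C_\Omega(D,G)$, and second that this residue code is itself an evaluation code, $C_\Omega(D,G) = C(D, D-G+K)$, for the canonical divisor $K = \operatorname{div}(\eta)$ described in the statement. Here I write $\Omega(A) := \{\omega : \operatorname{div}(\omega) \succeq A\} \cup \{0\}$ for the space of Weil differentials, and $C_\Omega(D,G)$ for the code of residue vectors $(\operatorname{res}_{P_1}(\omega), \ldots, \operatorname{res}_{P_n}(\omega))$ as $\omega$ ranges over $\Omega(G-D)$.

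For the first identity I would proceed by orthogonality plus a dimension count. Given $f \in L(G)$ and $\omega \in \Omega(G-D)$, the product $f\omega$ satisfies $\operatorname{div}(f\omega) = \operatorname{div}(f) + \operatorname{div}(\omega) \succeq -G + (G-D) = -D$, so its only possible poles are simple poles supported on $P_1, \ldots, P_n$. The Residue Theorem then gives $\sum_{i=1}^{n} \operatorname{res}_{P_i}(f\omega) = 0$, and since $f$ is regular at each $P_i$ we have $\operatorname{res}_{P_i}(f\omega) = f(P_i)\operatorname{res}_{P_i}(\omega)$; hence the evaluation vector of $f$ is orthogonal to the residue vector of $\omega$, giving $C_\Omega(D,G) \subseteq C^{\perp}(D,G)$. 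To upgrade this inclusion to an equality I would compare dimensions via Riemann-Roch: one has $\dim C(D,G) = \ell(G) - \ell(G-D)$ and $\dim C_\Omega(D,G) = n - \ell(G) + \ell(G-D)$, so the two codes have complementary dimensions and the inclusion is in fact an equality.

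For the second identity I would use the differential $\eta$ to trivialize $\Omega(G-D)$. Writing an arbitrary $\omega \in \Omega(G-D)$ as $\omega = f\eta$ with $f = \omega/\eta \in \mathbb{F}_q(X)$, the relation $\operatorname{div}(\omega) = \operatorname{div}(f) + K$ converts the condition $\operatorname{div}(\omega) \succeq G - D$ into $\operatorname{div}(f) \succeq (G - D) - K$, that is, $f \in L(D - G + K)$, and this correspondence is a bijection. Because $\eta$ has a simple pole with residue $1$ at each $P_i$ and $f$ is regular there (its order at $P_i$ is at least $\nu_{P_i}(G - D - K) = 0 - 1 - (-1) = 0$), I obtain $\operatorname{res}_{P_i}(\omega) = f(P_i)\operatorname{res}_{P_i}(\eta) = f(P_i)$. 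Thus the residue vector of $\omega$ coincides with the evaluation vector of $f$, which proves $C_\Omega(D,G) = C(D, D-G+K)$. Chaining the two identities delivers $C^{\perp}(D,G) = C(D, D-G+K)$.

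The step I expect to be the main obstacle is securing the distinguished differential $\eta$ with $\nu_{P_i}(\eta) = -1$ and $\operatorname{res}_{P_i}(\eta) = 1$ \emph{simultaneously} at all $n$ evaluation points; this is not automatic and rests on an approximation-theoretic argument (the Weak/Strong Approximation Theorem for the function field, or an equivalent Riemann-Roch surjectivity statement onto the local residue data). A secondary technical point is the dimension bookkeeping in the first identity, where one must treat the regimes $\deg G < \deg D$ and $\deg G \geq \deg D$ through Riemann-Roch to confirm the complementary-dimension count in every case.
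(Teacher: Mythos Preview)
Your argument is correct and is precisely the standard proof of this duality; however, the paper does not supply its own proof of this lemma at all---it simply quotes the result from Stichtenoth's textbook \cite{Stichtenoth1} and moves on. What you have written is essentially the argument one finds in that reference (Residue Theorem for the inclusion $C_\Omega(D,G)\subseteq C^\perp(D,G)$, Riemann--Roch for the dimension match, then the trivialization $\omega\mapsto\omega/\eta$ to identify the residue code with an evaluation code), so there is nothing to contrast beyond the fact that you proved it and the paper cited it.

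Your flagged ``main obstacle'' is accurately identified: the existence of a differential $\eta$ with prescribed simple poles and residue~$1$ at every $P_i$ is exactly the content that requires Strong Approximation (or, equivalently, the surjectivity of the residue map from $\Omega(-D)$ onto the target space cut out by the residue theorem). In the paper's later Lemma~\ref{lem:orthogonal} this is handled not abstractly but by writing down an explicit $\eta=dt/t$ with $t=x^m-x$, which is the typical workaround when the evaluation points are the zero locus of a separable polynomial in $x$.
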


\begin{lemma}\label{lem:basis}
For $r \geq 0$, the basis of $L(G)$ is given by:
\begin{equation*}
\left\{x^i y^j \mid i\frac{q + 1}{2} + jm \leq r, i \geq 0, 0 \leq j \leq q - 1\right\}.
\end{equation*}
\end{lemma}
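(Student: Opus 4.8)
The plan is to realize $L(G)$ as a filtered piece of the coordinate ring of $X$, graded by pole order at infinity. Because the support of $G$ consists of the place(s) of $X$ lying over $x=\infty$ (the intersection with $t=0$), the first task is to describe those places and the valuations of $x$ and $y$ there, and the second is to exhibit an explicit $\mathbb{F}_{q^2}$-basis of the ring of functions regular away from infinity and then keep only those whose pole order is bounded by $r$. Throughout, note that $i\frac{q+1}{2}+jm = in+jm$ with $n=\frac{q+1}{2}$ the $y$-degree of the defining equation.

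First I would analyze the point at infinity. Since $\gcd(n,m)=1$, the place $x=\infty$ of $\mathbb{F}_{q^2}(x)$ is totally ramified in $\mathbb{F}_{q^2}(x,y)$, so there is a unique point $P_\infty$ above it; writing $t=1/x$ and using $y^n=x^m+x$ one gets $v_{P_\infty}(x)=-n=-\frac{q+1}{2}$ and, from $n\,v_{P_\infty}(y)=m\,v_{P_\infty}(x)$, $v_{P_\infty}(y)=-m$. Consequently
\begin{equation*}
v_{P_\infty}(x^i y^j) = -\left(i\tfrac{q+1}{2}+jm\right),
\end{equation*}
so every monomial with $i\frac{q+1}{2}+jm\le r$ lies in $L(G)$, as $x$ and $y$ are regular at all affine points. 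Moreover, coprimality of $n$ and $m$ forces distinct pairs $(i,j)$ with $0\le j\le n-1$ to produce pairwise distinct pole orders: if $i_1 n+j_1 m=i_2 n+j_2 m$ then $n\mid(j_1-j_2)$, whence $j_1=j_2$ and $i_1=i_2$. Functions with distinct valuations at a single place are linearly independent, so the proposed set is independent and contained in $L(G)$.

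The remaining, and main, step is completeness. I would show that the affine coordinate ring $\mathbb{F}_{q^2}[x,y]$ coincides with $\mathcal{O}:=\bigcup_{r\ge 0}L(rP_\infty)$, the ring of functions regular away from $P_\infty$. Since $\mathcal{O}$ is the integral closure of $\mathbb{F}_{q^2}[x]$ in $\mathbb{F}_{q^2}(x,y)$ (the only place over $x=\infty$ being $P_\infty$), it suffices that $\mathbb{F}_{q^2}[x,y]$ be integrally closed, which follows from nonsingularity of the affine model $y^n=x^m+x$; the hypotheses $\gcd(q,n)=\gcd(q,m-1)=1$ guarantee this, as $ny^{n-1}$ and $mx^{m-1}+1$ have no common zero on the curve. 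As $y$ is integral over $\mathbb{F}_{q^2}[x]$ with minimal polynomial $y^n-(x^m+x)$ of degree $n$, the ring is free of rank $n$ over $\mathbb{F}_{q^2}[x]$ with basis $1,y,\dots,y^{n-1}$, so $\{x^i y^j : i\ge 0,\ 0\le j\le n-1\}$ is an $\mathbb{F}_{q^2}$-basis of $\mathcal{O}$. Because these monomials have pairwise distinct pole orders, any $f=\sum c_{ij}x^i y^j$ satisfies $v_{P_\infty}(f)=-\max\{i\frac{q+1}{2}+jm : c_{ij}\ne 0\}$, so $f\in L(G)$ precisely when every monomial of $f$ already lies in $L(G)$; hence the admissible monomials span $L(G)$.

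I expect the completeness step to be the principal obstacle, since it rests on nonsingularity of the affine model and the identification $\mathcal{O}=\mathbb{F}_{q^2}[x,y]$, whereas the valuation bookkeeping at $P_\infty$ is routine once the ramification is settled. As a consistency check I would verify the dimension count: the values $\{in+jm : i\ge 0,\ 0\le j\le n-1\}$ form the numerical semigroup $\langle n,m\rangle=H(P_\infty)$, whose number of gaps is $\frac{(n-1)(m-1)}{2}=g(X)$, matching $\ell(rP_\infty)=r-g+1$ for $r>2g-2$. I would also flag that the natural range of the $y$-exponent is $0\le j\le n-1=\frac{q-1}{2}$, dictated by the degree $n=\frac{q+1}{2}$ of $\mathbb{F}_{q^2}(x,y)/\mathbb{F}_{q^2}(x)$; enlarging it would reintroduce the relation $y^n=x^m+x$ and break independence, so this is the range where I would take particular care.
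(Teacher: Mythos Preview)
Your argument is correct and in its broad outline matches the paper: both compute the pole divisors $(x)_\infty=\tfrac{q+1}{2}P_\infty$, $(y)_\infty=mP_\infty$, deduce containment of the monomials in $L(G)$, and then finish with a dimension count. The paper phrases the completeness step as ``$H(P_\infty)=\langle n,m\rangle$, hence $\dim L(G)$ equals the number of admissible $(i,j)$,'' whereas you reach the same conclusion by identifying $\bigcup_r L(rP_\infty)$ with the affine coordinate ring via nonsingularity and integral closure. Your route is a bit longer but entirely self-contained; the paper's route assumes the reader knows that for a plane curve with a single totally ramified point at infinity the Weierstrass semigroup is generated by the two pole orders.

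One point worth highlighting: you are right to flag the exponent range. The paper's statement allows $0\le j\le q-1$ and its proof asserts that ``$y$ satisfies an equation of degree $q$ over $\mathbb{F}_{q^2}(x)$,'' but in fact $y$ satisfies $y^n=x^m+x$ with $n=\tfrac{q+1}{2}$, so the degree is $\tfrac{q+1}{2}$ and the correct independent range is $0\le j\le \tfrac{q-1}{2}$. With the paper's larger range the listed set is not linearly independent once $r\ge m\cdot\tfrac{q+1}{2}$, since $y^{(q+1)/2}-x^m-x=0$ is then a nontrivial relation among the listed monomials. Your distinct-pole-order argument (using $\gcd(n,m)=1$ to force $(i_1,j_1)=(i_2,j_2)$ when $0\le j\le n-1$) is the clean way to see both the independence and why the bound on $j$ must be $n-1=\tfrac{q-1}{2}$ rather than $q-1$.
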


\begin{proof}
We know that $(x)_{\infty} = \frac{q+1}{2}P_{\infty}$ and $(y)_{\infty} = mP_{\infty}$, so the above set is contained in $L(G)$. The restriction $0 \leq j \leq q - 1$ ensures that the elements $x^i y^j$ are linearly independent over $\mathbb{F}_{q^2}$. This linear independence stems from the fact that $y$ satisfies an equation of degree $q$ over $\mathbb{F}_{q^2}(x)$, so the powers of $y$ up to $q-1$ are linearly independent over this field.

Consider the Weierstrass semigroup $H(P_{\infty})$, generated by $n$ and $m$ at $P_{\infty}$. Suppose that $L(G) = L(\rho_{\ell}P_{\infty})$ where $\rho_{\ell} \leq r \leq \rho_{\ell+1}$ and $H(P_{\infty}) = \{\rho_0 = 0 < \rho_1 < \cdots \}$. Then:
\begin{equation*}
\dim_{\mathbb{F}_q}(L(G)) = \#\left\{i\frac{q + 1}{2} + jm \leq r, i \geq 0, 0 \leq j \leq q - 1\right\}. 
\end{equation*}
This dimension count confirms that our set forms a basis for $L(G)$.
\end{proof}

Let $C_r := C_L(D, G)$, and $k_r := \dim_{\mathbb{F}_{q^2}}(C_r)$. We denote the divisor $\div(x)$ by $(x)$.

\begin{lemma}\label{lem:orthogonal}
We have:
\begin{equation*}
C_r^{\perp} = C_{q^2 + \frac{(q-1)(m-1)}{2} - r}.
\end{equation*}
Hence, $C_r$ is self-orthogonal if $2r \leq q^2 + \frac{(q-1)(m-1)}{2}$.
\end{lemma}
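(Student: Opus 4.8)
The plan is to obtain the dual through Lemma~\ref{lem:stichtenoth} and then reduce the resulting divisor to a multiple of the fixed locus $\mathcal{G}$. By Lemma~\ref{lem:stichtenoth} we have $C_r^{\perp} = C(D, D - G + K)$, where $K = \mathrm{div}(\eta)$ for a differential $\eta$ with $\nu_{P_i}(\eta) = -1$ and $\mathrm{res}_{P_i}(\eta) = 1$ at every point $P_i$ of $\mathcal{D} = \mathrm{supp}(D)$. Since $\tfrac{(q-1)(m-1)}{2} = 2g$, the target exponent is of the form $r' = \deg(D) + 2g - r$, so the whole statement reduces to the divisor-level claim that $D - G + K$ defines the same evaluation code as $r'\mathcal{G}$. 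Everything therefore hinges on choosing $\eta$ and computing $K$ explicitly.

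For the differential I would separate the $\mathbb{F}_{q^2}$-values of $x$ from the $\mathbb{F}_q$-values. Because $\tfrac{d}{dx}\bigl(x - x^{q^2}\bigr) = 1 - q^2 x^{q^2-1} = 1$ in characteristic $p$, the form $\eta = \dfrac{dx}{x - x^{q^2}}$ has residue exactly $1$ at each place where $x$ takes a value in $\mathbb{F}_{q^2}$. The points of $\mathcal{D}$ all lie over such values and are unramified for the degree-$\tfrac{q+1}{2}$ map $x\colon X \to \mathbb{P}^1$ (one checks that the whole ramification locus, namely $P_\infty$ together with the $m$ places where $y=0$ lying over $x=0$ and the roots of $x^{m-1}+1$, is disjoint from $\mathcal{D}$), so $\eta$ has a simple pole with residue $1$ along $\mathcal{D}$ and meets the hypotheses of Lemma~\ref{lem:stichtenoth}. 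I would then compute $K = \mathrm{div}(dx) - \mathrm{div}\bigl(x - x^{q^2}\bigr)$: Riemann--Hurwitz for the tame, totally ramified map $x$ gives $\mathrm{div}(dx) = \tfrac{q-1}{2}\sum_{j=1}^{m} Q_j - \tfrac{q+3}{2}P_\infty$ (which has degree $2g-2$, as a check), while the factorization $x - x^{q^2} = -(x^q - x)\,h(x)$ with $h(x)=\prod_{a\in\mathbb{F}_{q^2}\setminus\mathbb{F}_q}(x-a)$ locates the zeros of the denominator. Subtracting yields the expected shape $K = -D + E$, with $E$ supported entirely away from $\mathcal{D}$ (on $\mathcal{G}$, on $P_\infty$, and on the non-rational places over $\mathbb{F}_{q^2}$-values of $x$).

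The crux is then to show that $D - G + K = E - r\mathcal{G}$ is code-equivalent to $r'\mathcal{G}$, that is, that $E - r\mathcal{G}$ and $r'\mathcal{G}$ differ by the divisor of a function that is \emph{constant along} $\mathcal{D}$, so that the two codes coincide as sets and not merely up to a diagonal rescaling. Here I would use the $\mathrm{Gal}(\mathbb{F}_{q^2}/\mathbb{F}_q)$-symmetry permuting $\mathcal{G}$ together with the semigroup description of Lemma~\ref{lem:basis} at $P_\infty$ to absorb the $P_\infty$- and ramification-contributions of $E$ into a single uniform multiple of $\mathcal{G}$; comparing the resulting coefficient gives $r' = q^2 + \tfrac{(q-1)(m-1)}{2} - r$, and Lemma~\ref{lem:basis}, applied to both $r$ and $r'$, confirms $\dim C_{r'} = q^2 - \dim C_r$ as required of a dual. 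I expect this reduction --- pinning down $E$ precisely and verifying that the linear equivalence to $r'\mathcal{G}$ is realized by a $\mathcal{D}$-constant function --- to be the genuine obstacle, since it is exactly where the split between $\mathbb{F}_q$- and $\mathbb{F}_{q^2}$-rational points must be controlled; the residue bookkeeping and the degree check are routine by comparison.

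Finally, the self-orthogonality criterion follows formally. By Lemma~\ref{lem:basis} the Riemann--Roch spaces underlying $C_r$ are nested increasingly in $r$, so $C_r \subseteq C_{r'}$ holds precisely when $r \le r'$, i.e. when $2r \le q^2 + \tfrac{(q-1)(m-1)}{2}$; combined with $C_r^{\perp} = C_{r'}$ this is exactly the condition $C_r \subseteq C_r^{\perp}$.
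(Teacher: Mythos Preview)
Your outline is the paper's strategy: invoke Lemma~\ref{lem:stichtenoth}, take $\eta=dt/t$ for a function $t$ with simple zeros along $\mathrm{supp}(D)$, and read off $D-G+K$. The divergence is at the computation of $(dx)$. The paper simply asserts $(dt)=(dx)=(2g-2)P_\infty$ and finishes in one line; you instead run Riemann--Hurwitz for the degree-$n$ map $x$ and correctly pick up the contributions $(n-1)\sum_{j}Q_j$ at the finite ramification points, which leaves you with an extra divisor $E$ supported off $\mathcal{D}$ that must still be absorbed into the form $r'\mathcal{G}$ --- the step you flag as the crux and leave open.

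Your caution is justified: the assertion $(dx)=(2g-2)P_\infty$ is not correct for $y^{n}=x^{m}+x$. Already for $y^2=x^3+x$ over $\mathbb{F}_9$ one has $(dx)=Q_1+Q_2+Q_3-3P_\infty\neq 0$. What \emph{is} true is the linear equivalence
\[
(dx)-(2g-2)P_\infty \;=\; (n-1)\,\mathrm{div}(y)\;=\;\mathrm{div}\bigl(y^{\,n-1}\bigr),
\]
which is precisely the discrepancy $E$ you are seeing and explains why the paper's degree bookkeeping still comes out right. But equivalence via $y^{n-1}$ only yields \emph{monomial} equivalence of the two evaluation codes unless $y^{n-1}$ is constant along $\mathcal{D}$, which it is not; so the literal equality $C_r^{\perp}=C_{r'}$ (rather than monomial equivalence, cf.\ Proposition~\ref{prop:monomially_equivalent}) is not established by the paper's argument either. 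In short, you and the paper take the same route; your version is the careful one, and the ``$\mathcal{D}$-constant function'' obstacle you isolate is a genuine gap that the paper's proof, as written, elides rather than resolves.
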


\begin{proof}
We have $C_r^{\perp} = C(D, D - G + W)$, where $W$ is a canonical divisor as described in Lemma \ref{lem:stichtenoth}. To determine $W$, we calculate an appropriate differential $\eta$. We choose $\eta = dt/t$, where $t := x^m - x = \prod_{a \in \mathbb{F}_{q^2}}(x-a)$, for the following reasons:

First, observe that:
\begin{equation*}
(x - a) = \sum_{b^{q+1/2} = a^m + a} P_{a,b} - nP_{\infty}
\end{equation*}
Thus:
\begin{equation*}
(t) = D = q^2 P_{\infty}.
\end{equation*}

\paragraph*{}\paragraph*{} Additionally, we have $(dt) = (dx) = (2g - 2)P_{\infty} = (\frac{(q-1)(m-1)}{2})P_{\infty}$. Consequently:
\begin{equation*}
\nu_P(\eta) = -1 \quad \text{and} \quad res_P \eta = 1 \quad \text{for all } P \in Supp(D).
\end{equation*}

Now, we can calculate:
\begin{align*}
D - G - (\eta) &= D - G - D + q^2 P_{\infty} + (\frac{(q-1)(m-1)}{2})P_{\infty} \\
&= (q^2 + \frac{(q-1)(m-1)}{2} - r)P_{\infty}
\end{align*}

This calculation proves the first part of the lemma. For the second part, note that $C_r$ is self-orthogonal if and only if $C_r \subseteq C_r^{\perp}$, which is equivalent to 
$$r \leq q^2 + \frac{(q-1)(m-1)}{2} - r, \hspace{0.45cm} \mbox{or} \hspace{0.45cm} 2r \leq q^2 + \frac{(q-1)(m-1)}{2}.$$
\end{proof}

Let $T(r) := \#\{i\frac{q + 1}{2} + jm \leq r, i \geq 0, 0 \leq j \leq q - 1\}$.

\begin{proposition}\label{prop:kr}
\begin{enumerate}
    \item \mbox{If} $r < 0$ \mbox{then} $k_r = 0$,
    \item If $0 \leq r \leq \frac{(q-1)(m-1)}{2}$ then $k_r = T(r)$,
    \item If $\frac{(q-1)(m-1)}{2} < r < q^2$ then $k_r = r(q + 1) - \frac{(q-1)(m-1)}{4}$,
    \item If $q^2 \leq r \leq q^2 + \frac{(q-1)(m-1)}{2}$ then $k_r = q^2 - T(q^2 + \frac{(q-1)(m-1)}{2} - r)$,
    \item If $r > q^2 + \frac{(q-1)(m-1)}{2}$ then $k_r = q^2$.
\end{enumerate}
\end{proposition}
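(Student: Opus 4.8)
The plan is to read off $k_r=\dim_{\mathbb{F}_{q^2}}C_L(D,G)$ from the exact dimension formula $k_r=\ell(G)-\ell(G-D)$ and to let the five cases be dictated by the three thresholds $r=\tfrac{(q-1)(m-1)}{2}$ (which is $2g$), $r=q^2=\deg(D)$, and $r=q^2+\tfrac{(q-1)(m-1)}{2}$, since these are exactly the degrees at which one of the two terms changes regime. The first reduction is purely by degree: whenever $r<q^2$ we have $\deg(G-D)=\deg(G)-q^2<0$, so $\ell(G-D)=0$ and $k_r=\ell(G)$. This disposes of the second term throughout cases (1)--(3) and reduces them to evaluating the single Riemann--Roch dimension $\ell(G)$, while the range $r\ge q^2$ is deferred to a duality argument.

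For $\ell(G)$ I would split at the largest gap of the Weierstrass semigroup $H(P_\infty)=\langle\tfrac{q+1}{2},m\rangle$, which (using $\gcd(\tfrac{q+1}{2},m)=1$) is the numerical semigroup of genus $g=\tfrac{(q-1)(m-1)}{4}$ with Frobenius number $2g-1=\tfrac{(q-1)(m-1)}{2}-1$. If $r<0$ then $\deg G<0$ and $\ell(G)=0$, which is case (1). If $0\le r\le\tfrac{(q-1)(m-1)}{2}$, some gaps still exceed $r$, no closed form is available, and I would read $\ell(G)=T(r)$ directly from Lemma~\ref{lem:basis}; this is case (2). Once $r>\tfrac{(q-1)(m-1)}{2}$ every nonnegative integer up to $r$ is a non-gap, the canonical term $\ell(K-G)$ vanishes, and Riemann--Roch collapses the count to $\ell(G)=\deg(G)-g+1$, which after substitution gives the linear expression of case (3).

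The two cases with $r\ge q^2$ I would obtain by reflecting through the self-duality already proved in Lemma~\ref{lem:orthogonal}. Combining $C_r^{\perp}=C_{q^2+\frac{(q-1)(m-1)}{2}-r}$ with $\dim C_r+\dim C_r^{\perp}=q^2$ gives $k_r+k_{q^2+\frac{(q-1)(m-1)}{2}-r}=q^2$. Setting $r':=q^2+\tfrac{(q-1)(m-1)}{2}-r$, the range $r>q^2+\tfrac{(q-1)(m-1)}{2}$ maps to $r'<0$, so $k_{r'}=0$ by case (1) and $k_r=q^2$, giving case (5); and the range $q^2\le r\le q^2+\tfrac{(q-1)(m-1)}{2}$ maps to $0\le r'\le\tfrac{(q-1)(m-1)}{2}$, so $k_{r'}=T(r')$ by case (2) and $k_r=q^2-T(q^2+\tfrac{(q-1)(m-1)}{2}-r)$, giving case (4). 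As a check, case (5) also follows directly, since for $\deg G$ that large both $\ell(G)$ and $\ell(G-D)$ lie in the Riemann--Roch range and $k_r=\deg(G)-\deg(G-D)=\deg(D)=q^2$.

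The step I expect to be the real obstacle is the transition at $r=\tfrac{(q-1)(m-1)}{2}$ between the combinatorial count $T(r)$ of case (2) and the closed linear formula of case (3). Making it rigorous rests on identifying $H(P_\infty)$ exactly --- that $\tfrac{q+1}{2}$ and $m$ generate it with genus precisely $g$ and with no non-gap below the Frobenius number that the monomials of Lemma~\ref{lem:basis} fail to realize --- so that those monomials genuinely span $L(G)$ rather than merely lie inside it, and on verifying that the two formulas agree at the overlap so that the piecewise description is consistent. The remaining care is bookkeeping: confirming $\ell(G-D)=0$ up to exactly $r=q^2$ and that the reflection in Lemma~\ref{lem:orthogonal} tiles the ranges of cases (4)--(5) onto those of (1)--(2) with neither gap nor overlap.
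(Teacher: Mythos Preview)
Your proposal is correct and follows essentially the same route as the paper: Lemma~\ref{lem:basis} handles case (2), Riemann--Roch handles case (3), and the duality of Lemma~\ref{lem:orthogonal} together with $\dim C_r+\dim C_r^{\perp}=q^2$ reduces cases (4) and (5) to cases (2) and (1), respectively. Your framing via the exact formula $k_r=\ell(G)-\ell(G-D)$ and the explicit identification of the Frobenius number of $H(P_\infty)=\langle\tfrac{q+1}{2},m\rangle$ is slightly more detailed than the paper's presentation, but the argument is the same.
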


\begin{proof}
\begin{enumerate}
    \item If $r < 0$, it is trivial that $k_r = 0$ as there are no functions in $L(G)$.
    
    \item If $0 \leq r \leq \frac{(q-1)(m-1)}{2}$, then by Lemma \ref{lem:basis}, the dimension is exactly the number of pairs $(i,j)$ satisfying the inequality, which is $T(r)$.
    
    \item If $\frac{(q-1)(m-1)}{2} < r < q^2$, then by the Riemann-Roch Theorem, we have $k_r = \deg(G) + 1 - g = r(q + 1) + 1 - \frac{(q-1)(m-1)}{2} = r(q + 1) - \frac{(q-1)(m-1)}{4}$, since $n > \deg(G) > 2g - 2$.
    
    \item Let $r' := q^2 + \frac{(q-1)(m-1)}{2} - r$. Then $0 \leq r' \leq \frac{(q-1)(m-1)}{2}$. From Lemma \ref{lem:orthogonal}, we know that $C_r^{\perp} = C_{r'}$. Therefore, $k_r = q^2 - \dim_{\mathbb{F}_{q^2}}(C_{r'}) = q^2 - T(r') = q^2 - T(q^2 + \frac{(q-1)(m-1)}{2} - r)$.
    
    \item If $r > q^2 + \frac{(q-1)(m-1)}{2}$, then $C_r^{\perp} = \{0\}$ and so $\dim_{\mathbb{F}_{q^2}}(C_r) = n = q^2 = k_r$.
\end{enumerate}
\end{proof}

\begin{definition}
Two linear codes $C_1$ and $C_2$ of length $n$ over $\mathbb{F}_q$ are said to be monomially equivalent if there exists a monomial matrix $M$ (i.e., a matrix with exactly one nonzero entry in each row and column) over $\mathbb{F}_q$ such that $C_2 = C_1M = \{cM : c \in C_1\}$.
\end{definition}

\begin{proposition}\label{prop:monomially_equivalent}
The code $C$ is monomially equivalent to the one-point code $C(D, r(q + 1)P_{\infty})$.
\end{proposition}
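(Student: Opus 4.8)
The plan is to realize the claimed equivalence as coordinatewise rescaling induced by a single rational function. I would first record the general principle: if $G_1$ and $G_2$ are divisors with $\mathrm{supp}(G_i)\cap\mathrm{supp}(D)=\emptyset$ and $G_1\sim G_2$, say $(h)=G_1-G_2$ for some $h\in\mathbb{F}_{q^2}(X)$, then $f\mapsto hf$ is an $\mathbb{F}_{q^2}$-linear isomorphism $L(G_1)\xrightarrow{\sim}L(G_2)$, since $(hf)=(h)+(f)\ge (G_1-G_2)-G_1=-G_2$ for every $f\in L(G_1)$. I would apply this with $G_1=G=\sum_{P\in\mathcal{G}}rP$ and $G_2=r(q+1)P_\infty$, which have the same degree $r(q+1)$.

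The core of the argument is to establish the linear equivalence $G\sim r(q+1)P_\infty$, i.e.\ to produce $h\in\mathbb{F}_{q^2}(X)$ with $(h)=\sum_{P\in\mathcal{G}}rP-r(q+1)P_\infty$. It suffices to find $h_0$ with $(h_0)=\sum_{P\in\mathcal{G}}P-(q+1)P_\infty$ and take $h:=h_0^{\,r}$. The natural candidate for $h_0$ is a suitable normalization of the function $t$ that cuts out $\mathcal{G}=X(\mathbb{F}_q)$ on $X$; I would verify that its divisor is supported only on $\mathcal{G}\cup\{P_\infty\}$ and that the resulting coefficients match the displayed divisor, using the count $\#X(\mathbb{F}_q)=q+1$ and the fact that each of these rational points meets $t=0$ with multiplicity one. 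The degree-zero condition $\deg\big(\sum_{P\in\mathcal{G}}P-(q+1)P_\infty\big)=0$ is automatic and serves as a consistency check.

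Finally, I would extract the monomial equivalence. Because $\mathrm{supp}((h))=\mathcal{G}\cup\{P_\infty\}$ is disjoint from $\mathcal{D}=\mathrm{supp}(D)$, the scalars $h(P)$ for $P\in\mathcal{D}$ are all finite and nonzero, so $M:=\mathrm{diag}\big(h(P)\big)_{P\in\mathcal{D}}$ is an invertible diagonal, hence monomial, matrix. For $f\in L(G)$ one has $(hf)(P)=h(P)f(P)$ at every $P\in\mathcal{D}$, so applying the evaluation map to the isomorphism $f\mapsto hf$ gives $C_L(D,r(q+1)P_\infty)=M\cdot C_L(D,G)$; since $f\mapsto hf$ is onto $L(r(q+1)P_\infty)$, this is an equality of codes up to the monomial matrix $M$, which is precisely monomial equivalence. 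The main obstacle is the middle step: confirming that the function cutting out the $\mathbb{F}_q$-rational points has exactly the divisor claimed, with simple zeros along $\mathcal{G}$ and poles concentrated at $P_\infty$, which is where the specific geometry of $X$ must be invoked rather than formal divisor manipulation.
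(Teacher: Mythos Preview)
Your proposal is correct and follows essentially the same approach as the paper: both arguments exhibit a rational function $t$ (your $h_0$) whose divisor is $\sum_{P\in\mathcal G}P-(q+1)P_\infty$, use its $r$-th power to set up the isomorphism between the Riemann--Roch spaces $L(G)$ and $L(r(q+1)P_\infty)$ by multiplication, and then read off the monomial equivalence as coordinatewise scaling by the nonzero values $t^r(P)$ at the points of $D$. Your writeup is in fact slightly cleaner in isolating the general principle and in explicitly noting that the support of $(h)$ is disjoint from $\mathcal D$, which is what guarantees the diagonal matrix is invertible.
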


\begin{proof}
Let $G' = r(q + 1)P_{\infty}$. Then $G = G' + (t^r)$, where $t = x^m - x$ as defined earlier. The divisor $(t^r)$ is the sum of $r$ distinct $\mathbb{F}_{q^2}$-rational points, each with coefficient 1. 

Consider the map $\phi: L(G') \to L(G)$ defined by $\phi(f) = ft^r$. This map is clearly injective and preserves dimensions. Moreover, for any $f \in L(G')$, we have:

\begin{align*}
(ft^r) &= (f) + r(t) \\
&\geq -G' + r(t) \\
&= -r(q+1)P_{\infty} + r(q^2P_{\infty} - D) \\
&= r(q^2-q-1)P_{\infty} - rD \\
&\geq -G
\end{align*}

Thus, $\phi(L(G')) \subseteq L(G)$. Since both spaces have the same dimension, we conclude that $\phi$ is an isomorphism.

Now, the evaluation of $ft^r$ at a point $P \in Supp(D)$ differs from the evaluation of $f$ at $P$ by a nonzero scalar (namely, $t^r(P)$). This scalar depends only on $P$ and not on $f$. Therefore, the codes $C(D,G)$ and $C(D,G')$ differ only by coordinate-wise multiplication by nonzero scalars, which is precisely the definition of monomial equivalence.
\end{proof}

\begin{theorem}\label{thm:self_orthogonal}
For $r \leq q - 1$, $C_r$ is Hermitian self-orthogonal.
\end{theorem}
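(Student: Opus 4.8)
The plan is to reduce Hermitian self-orthogonality to a single pole-order inequality at $P_\infty$ and then to settle it by the residue computation already set up in Lemma~\ref{lem:orthogonal}, the only new ingredient being that raising the second factor to the $q$-th power turns the Euclidean bound $2r$ into $(q+1)r$. By the definition of the Hermitian inner product, $C_r = C_L(D,G)$ is Hermitian self-orthogonal if and only if
\begin{equation*}
\sum_{P \in \mathcal{D}} f(P)\, g(P)^q = 0 \qquad \text{for all } f,g \in L(G).
\end{equation*}
The first step is the observation that $g(P)^q = g^q(P)$, where $g^q$ is the $q$-th power of the function $g$; since $\nu_P(g^q) = q\,\nu_P(g)$ at every place, we have $g^q \in L(qG)$, and therefore $h := f g^q \in L((q+1)G)$. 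By Lemma~\ref{lem:basis} the functions in $L(G)$ have their only pole at $P_\infty$, of order at most $r$, so $h$ has its only pole at $P_\infty$, of order at most $(q+1)r$. In this way the claim is reduced to showing that $\sum_{P \in \mathcal{D}} h(P) = 0$ for every such $h \in L((q+1)G)$.

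Next I would reuse the differential $\eta = dt/t$ introduced in the proof of Lemma~\ref{lem:orthogonal}, which satisfies $\nu_P(\eta) = -1$ and $\mathrm{res}_P(\eta) = 1$ for every $P \in \mathrm{supp}(D)$, so that $\mathrm{res}_P(h\eta) = h(P)$ at each $P \in \mathcal{D}$. The Residue Theorem gives $\sum_{\text{all } P} \mathrm{res}_P(h\eta) = 0$, whence
\begin{equation*}
\sum_{P \in \mathcal{D}} h(P) \;=\; -\sum_{P \notin \mathcal{D}} \mathrm{res}_P(h\eta).
\end{equation*}
Because $h$ is regular away from $P_\infty$ and $\eta$ has poles only along $\mathrm{supp}(D)$, the single place outside $\mathcal{D}$ that can contribute to the right-hand side is $P_\infty$. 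There Lemma~\ref{lem:orthogonal} records $\nu_{P_\infty}(\eta) = q^2 + \tfrac{(q-1)(m-1)}{2}$, so
\begin{equation*}
\nu_{P_\infty}(h\eta) \;\geq\; q^2 + \tfrac{(q-1)(m-1)}{2} - (q+1)r.
\end{equation*}

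The proof then closes with a purely numerical comparison. The residue at $P_\infty$ vanishes as soon as $h\eta$ has no pole there, i.e.\ as soon as $(q+1)r \leq q^2 + \tfrac{(q-1)(m-1)}{2}$ --- exactly the Hermitian analogue of the Euclidean self-orthogonality bound $2r \leq q^2 + \tfrac{(q-1)(m-1)}{2}$ of Lemma~\ref{lem:orthogonal}, with the factor $2$ replaced by $q+1$ because $g$ now appears through $g^q$. For $r \leq q-1$ one has $(q+1)r \leq (q+1)(q-1) = q^2 - 1 < q^2 \leq q^2 + \tfrac{(q-1)(m-1)}{2}$, so $h\eta$ is in fact regular at $P_\infty$, its residue there is $0$, and the displayed sum vanishes. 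I expect the only genuinely delicate point to be the bookkeeping that isolates $P_\infty$ as the sole off-$\mathcal{D}$ contribution: one must confirm, via Lemma~\ref{lem:basis}, that every element of $L((q+1)G)$ arising as $f g^q$ is regular at all finite places outside $\mathcal{D}$, so that no stray residues survive, and one must make sure the identity $g(P)^q = g^q(P)$ is applied consistently with the $\mathbb{F}_{q^2}$-structure of the evaluation map. Everything else parallels Lemma~\ref{lem:orthogonal} line for line, with $2r$ systematically upgraded to $(q+1)r$.
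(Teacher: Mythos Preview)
Your argument is correct and reaches exactly the same numerical condition as the paper, namely $(q+1)r \le q^2 + \tfrac{(q-1)(m-1)}{2}$ (the paper writes this as $rq \le q^2 + \tfrac{(q-1)(m-1)}{2} - r$, up to a harmless extra $-2$). The packaging, however, is different. The paper's proof is a two-line reduction: it invokes Lemma~\ref{lem:orthogonal} (the Euclidean dual identification $C_r^\perp = C_{q^2+\frac{(q-1)(m-1)}{2}-r}$) together with the standard but unstated fact that Hermitian self-orthogonality of $C_r$ is equivalent to $C_r^q \subseteq C_r^\perp$, and since $C_r^q \subseteq C_{rq}$ (raising to the $q$-th power multiplies pole orders by $q$), it suffices to check $rq \le q^2 + \tfrac{(q-1)(m-1)}{2} - r$. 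You instead unfold the residue machinery behind Lemma~\ref{lem:orthogonal} and apply it directly to $h = fg^q$, bypassing the Euclidean dual statement entirely. Your route is more self-contained and makes the Hermitian twist explicit (so a reader sees exactly where the factor $q+1$ replaces $2$), at the cost of redoing work already encapsulated in Lemma~\ref{lem:orthogonal}; the paper's route is shorter but leans on a bridge ($C^{\perp_H} = (C^q)^\perp$) it never spells out.
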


\begin{proof}
If $r \leq q - 1$, then we have:
\begin{align*}
rq &\leq q^2 - q \\
&= q^2 + \frac{(q-1)(m-1)}{2} - \frac{(q-1)(m-1)}{2} - q \\
&\leq q^2 + \frac{(q-1)(m-1)}{2} - 2 - r
\end{align*}
The last inequality holds because $\frac{(q-1)(m-1)}{2} \geq q + 1$ for $m \geq 3$ and $q \geq 2$. Hence, the result follows from Lemma \ref{lem:orthogonal}.
\end{proof}

\section{Simulation Results}

To validate the theoretical results and assess the performance of the Goppa codes derived from curves of the form $y^n = x^m + x$, we conducted extensive simulations. This section presents the simulation methodology, algorithms, and results.

\subsection{Simulation Methodology}

We simulated the performance of three Goppa codes over the finite field $\mathbb{F}_{16}$ with varying parameters. The codes were constructed using curves $y^{(q+1)/2} = x^m + x$ for $m \in \{3, 4, 5\}$, resulting in codes with parameters $[8, 2, 6]$, $[16, 4, 13]$, and $[32, 3, 28]$ respectively.

The simulation process involved encoding random messages, introducing errors at various rates, and attempting to decode the received words. We measured the decode success rate, the rate of detected but uncorrectable errors, and the average number of errors per transmission.

\subsection{Algorithms}

The simulation was based on three main algorithms: the overall simulation process, the transmission simulation, and the decoding algorithm. These are presented below with explanations.

\begin{algorithm}
\caption{Goppa Code Simulation}

\begin{algorithmic}
\small
\setlength{\baselineskip}{0.9\baselineskip} 
\STATE \textbf{Input:} field size $q$, curve parameter $m$, error\_rates, num\_transmissions
\STATE \textbf{Output:} decode\_success\_rates, detected\_uncorrectable\_rates, avg\_errors
\STATE codes $\leftarrow$ [CreateGoppaCode($q$, $m$) for $m$ in $\{3, 4, 5\}$]
\FOR{each code in codes}
    \STATE decode\_success\_rates $\leftarrow$ []
    \STATE detected\_uncorrectable\_rates $\leftarrow$ []
    \STATE avg\_errors $\leftarrow$ []
    \FOR{each rate in error\_rates}
        \STATE success\_rate, uncorrectable\_rate, avg\_error $\leftarrow$ SimulateTransmission(code, rate, num\_transmissions)
        \STATE decode\_success\_rates $\leftarrow$ decode\_success\_rates $\cup$ \{success\_rate\}
        \STATE detected\_uncorrectable\_rates $\leftarrow$ detected\_uncorrectable\_rates $\cup$ \{uncorrectable\_rate\}
        \STATE avg\_errors $\leftarrow$ avg\_errors $\cup$ \{avg\_error\}
    \ENDFOR
    \STATE PlotResults(code, decode\_success\_rates, detected\_uncorrectable\_rates)
\ENDFOR
\STATE PlotAverageErrors(codes, error\_rates, avg\_errors)
\RETURN decode\_success\_rates, detected\_uncorrectable\_rates, avg\_errors
\end{algorithmic}

\end{algorithm}

This algorithm outlines the overall simulation process. It creates Goppa codes for different $m$ values, simulates transmissions over a range of error rates, and collects performance metrics. The results are then plotted for analysis.

\begin{algorithm}
\caption{SimulateTransmission}
\begin{algorithmic}
\small
\setlength{\baselineskip}{0.9\baselineskip} 
\STATE \textbf{Input:} code, error\_rate, num\_transmissions
\STATE \textbf{Output:} success\_rate, uncorrectable\_rate, avg\_error
\STATE successful\_decodes $\leftarrow$ 0
\STATE detected\_uncorrectable $\leftarrow$ 0
\STATE total\_errors $\leftarrow$ 0
\FOR{$i \leftarrow 1$ to num\_transmissions}
    \STATE message $\leftarrow$ RandomVector(Dimension(code))
    \STATE codeword $\leftarrow$ Encode(code, message)
    \STATE received\_word $\leftarrow$ ApplyRandomErrors(codeword, error\_rate)
    \STATE decoded\_word, status $\leftarrow$ DecodeGoppa(received\_word, code)
    \IF{status $\in$ \{"success", "corrected"\}}
        \STATE successful\_decodes $\leftarrow$ successful\_decodes + 1
    \ELSE
        \STATE detected\_uncorrectable $\leftarrow$ detected\_uncorrectable + 1
    \ENDIF
    \STATE total\_errors $\leftarrow$ total\_errors + CountErrors(codeword, received\_word)
\ENDFOR
\STATE success\_rate $\leftarrow$ successful\_decodes / num\_transmissions
\STATE uncorrectable\_rate $\leftarrow$ detected\_uncorrectable / num\_transmissions
\STATE avg\_error $\leftarrow$ total\_errors / num\_transmissions
\RETURN success\_rate, uncorrectable\_rate, avg\_error
\end{algorithmic}
\end{algorithm}

This algorithm simulates the transmission process. It generates random messages, encodes them, applies random errors based on the given error rate, attempts to decode, and collects statistics on the decoding performance.

\begin{algorithm}
\caption{DecodeGoppa}
\begin{algorithmic}
\small
\setlength{\baselineskip}{0.9\baselineskip} 
\STATE \textbf{Input:} received\_word, code
\STATE \textbf{Output:} decoded\_word, status
\STATE $H \leftarrow$ ParityCheckMatrix(code)
\STATE syndrome $\leftarrow H \times$ received\_word
\IF{syndrome = 0}
    \RETURN received\_word, "success"
\ENDIF
\FOR{$i \leftarrow 1$ to Length(code)}
    \STATE flipped\_word $\leftarrow$ received\_word
    \STATE flipped\_word[$i$] $\leftarrow 1 -$ flipped\_word[$i$]
    \IF{$H \times$ flipped\_word = 0}
        \RETURN flipped\_word, "corrected"
    \ENDIF
\ENDFOR
\RETURN null, "failure"
\end{algorithmic}
\end{algorithm}

This algorithm implements a simple decoding procedure for Goppa codes. It first checks if the received word is a valid codeword. If not, it attempts to correct a single error by flipping each bit and checking if the result is a valid codeword. If no single-bit flip results in a valid codeword, it reports a decoding failure.

\subsection{Results and Analysis}

The simulation results are presented in Figures \ref{fig:performance} and \ref{fig:errors}.

\begin{figure}[htbp]
\centering
\begin{minipage}{0.5\textwidth}
    \centering
    \includegraphics[width=\textwidth]{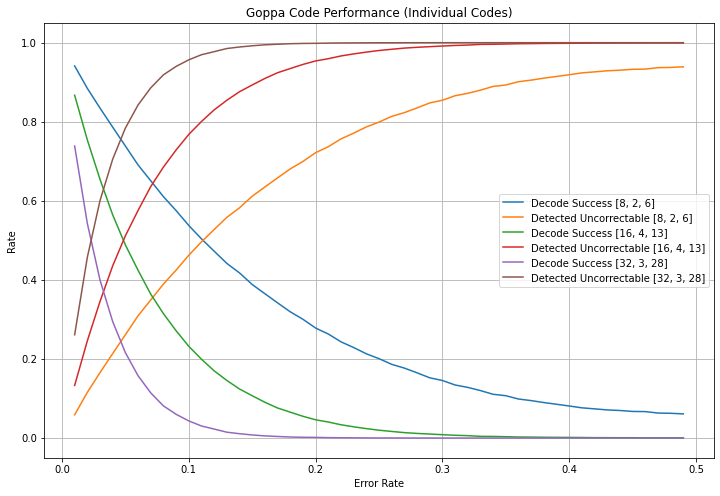}
    \caption{\small Goppa Code Performance (Individual Codes)}
    \label{fig:performance}
\end{minipage}\hfill
\begin{minipage}{0.5\textwidth}
    \vspace{-0.45cm} 
    \centering
    \includegraphics[width=\textwidth]{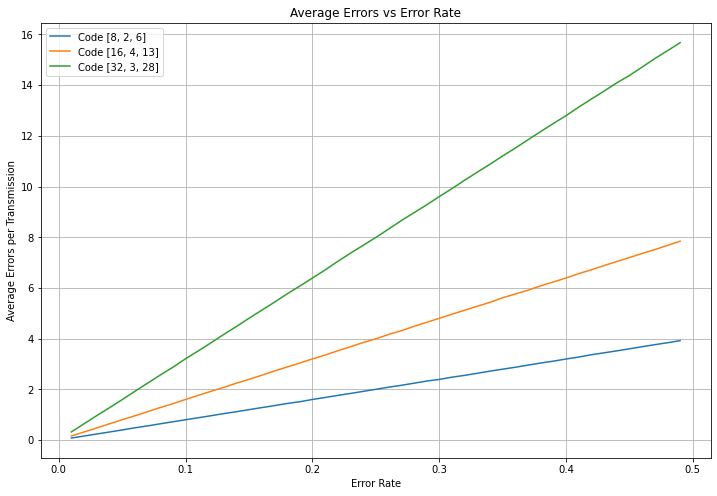}
    \caption{\small Average Errors vs Error Rate}
    \label{fig:errors}
\end{minipage}
\end{figure}

Figure \ref{fig:performance} shows the decode success rates and detected uncorrectable rates for each of the three Goppa codes as a function of the error rate. We observe that:

\begin{itemize}
    \item The $[8, 2, 6]$ code performs best at low error rates but its performance degrades rapidly as the error rate increases.
    \item The $[16, 4, 13]$ code shows moderate performance, maintaining a higher decode success rate than the $[8, 2, 6]$ code at higher error rates.
    \item The $[32, 3, 28]$ code, while performing worst at low error rates, maintains the highest decode success rate at high error rates.
\end{itemize}

Figure \ref{fig:errors} presents the average number of errors per transmission for each code as a function of the error rate. We note that:

\begin{itemize}
    \item The average number of errors increases linearly with the error rate for all codes, as expected.
    \item Longer codes accumulate more errors on average due to their increased length, but they can also correct more errors.
    \item Shorter codes have fewer errors on average but have limited error-correction capabilities.
\end{itemize}

These results demonstrate the trade-offs between code length, dimension, and error-correction capability in Goppa codes derived from curves of the form $y^n = x^m + x$. They provide empirical support for the theoretical results presented earlier in this paper and illustrate the practical performance characteristics of these codes in various noise environments.

\section{Quantum Stabilizer Codes Over Curve $X$}

In this section, we use the Hermitian self-orthogonality of $C_r$ established in the previous section to produce quantum stabilizer codes and analyze their parameters.

We begin with a fundamental result on quantum codes obtained from Hermitian self-orthogonal classical codes.

\begin{lemma}[{\cite{Ashikhmin}}]\label{lem:quantum_code}
There exists a $q$-ary $[[n, n-2k, d^{\perp}]]_q$ quantum code whenever there exists a $q$-ary classical Hermitian self-orthogonal $[n, k]$ linear code with dual distance $d^{\perp}$.
\end{lemma}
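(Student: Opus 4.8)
The plan is to reconstruct the Hermitian construction of stabilizer codes, reducing the claim to the standard dictionary between $q$-ary stabilizer codes and classical codes carrying a symplectic/Hermitian form. First I would introduce the $q$-ary error (Pauli) group $G_n$ acting on $(\mathbb{C}^q)^{\otimes n}$, generated by the shift operators $X(a)\colon |v\rangle \mapsto |v+a\rangle$ and the phase operators $Z(b)\colon |v\rangle \mapsto \omega^{\operatorname{tr}(b\cdot v)}|v\rangle$ for $a,b\in\mathbb{F}_q^n$, where $\omega$ is a primitive $p$-th root of unity and $\operatorname{tr}=\operatorname{tr}_{\mathbb{F}_q/\mathbb{F}_p}$. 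Modulo its center, $G_n \cong \mathbb{F}_q^{2n}$, and a stabilizer code is the joint $+1$-eigenspace of an abelian subgroup $S\le G_n$ containing no nontrivial scalar. The commutativity of $S$ is equivalent to self-orthogonality of the corresponding subset of $\mathbb{F}_q^{2n}$ with respect to the trace-symplectic form $\langle(a\mid b),(a'\mid b')\rangle_s = \operatorname{tr}\!\left(b\cdot a' - b'\cdot a\right)$.

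The key step is an isometry between $(\mathbb{F}_q^{2n},\langle\cdot,\cdot\rangle_s)$ and $(\mathbb{F}_{q^2}^n,\langle\cdot,\cdot\rangle_H)$ after applying an appropriate trace. Fixing an $\mathbb{F}_q$-basis $\{1,\gamma\}$ of $\mathbb{F}_{q^2}$, I would send $(a\mid b)\mapsto v = a+\gamma b \in \mathbb{F}_{q^2}^n$ and verify that the trace-symplectic form matches $\operatorname{tr}_{\mathbb{F}_{q^2}/\mathbb{F}_p}$ applied to $\langle\cdot,\cdot\rangle_H$, so that a Hermitian self-orthogonal code $C\subseteq C^{\perp H}$ is carried to a symplectic self-orthogonal subspace of $\mathbb{F}_q^{2n}$. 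Crucially, $v_i=0$ if and only if $a_i=b_i=0$, so the Hamming weight of $v$ equals the symplectic weight of $(a\mid b)$; this is exactly what lets a quantum distance be read off a classical Hamming distance.

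Given the $[n,k]_{q^2}$ Hermitian self-orthogonal code $C$, I would then let $S$ be the abelian subgroup of $G_n$ attached to the image of $C$; it has $q^{2k}$ elements modulo scalars, so the code it stabilizes has dimension $q^n/q^{2k}=q^{n-2k}$, yielding the parameters $[[n,n-2k,d]]_q$. The encoded (logical) operators are represented by $N(S)/S$, and under the dictionary $N(S)$ corresponds to $C^{\perp H}$; hence $d$ is the minimum symplectic weight on $C^{\perp H}\setminus C$, which by the weight correspondence equals the minimum Hamming weight on $C^{\perp H}\setminus C$, and is therefore at least the dual distance $d^{\perp}$ of $C$, with equality in the pure (non-degenerate) case.

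The step I expect to be the main obstacle is the bookkeeping in the distance assertion: verifying that $N(S)$ is carried precisely onto the full Hermitian dual $C^{\perp H}$ rather than some other associated code, and handling the degenerate-versus-pure dichotomy, since the true quantum distance is governed by weights in the set-theoretic difference $C^{\perp H}\setminus C$. The quoted value $d^{\perp}$ is the pure-code bound; I would remark that in general $d\ge d^{\perp}$, with equality when $C$ is non-degenerate, and that the hypothesis $C\subseteq C^{\perp H}$ already supplies the commutativity needed for $S$ to be abelian, so no additional condition is required.
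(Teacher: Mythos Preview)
The paper does not prove this lemma at all: it is stated as a citation to \cite{Ashikhmin} and used as a black box in the proof of Theorem~\ref{thm:main_result}. Your proposal, by contrast, supplies an actual argument, and it is the correct one --- the standard Hermitian construction of nonbinary stabilizer codes via the trace-symplectic/Hermitian isometry, exactly as in Ashikhmin--Knill. So there is nothing to compare against in the paper itself; you have simply filled in what the paper outsourced to the literature.

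One small remark on your sketch: you correctly flag the pure-versus-degenerate issue at the end, noting that the true quantum distance is $\min\{\operatorname{wt}(v):v\in C^{\perp H}\setminus C\}\ge d^{\perp}$. The lemma as stated in the paper asserts distance exactly $d^{\perp}$, which is only guaranteed in the pure case; your caveat that in general one obtains $d\ge d^{\perp}$ is the accurate statement. This is a wrinkle in how the lemma is phrased in the paper rather than a gap in your argument.
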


Using Lemma \ref{lem:quantum_code}, we can now state our main result on quantum codes derived from our construction.

\begin{theorem}\label{thm:main_result}
Let $q$ be a power of a prime $p$, and let $s \geq 1$. Then for the curve $X$ defined by $y^{\frac{q+1}{2}} = x^m + x$ over $\mathbb{F}_{q^2}$, there exists a $q$-ary 
\[
[[q^2, q^2 + \frac{(q-1)(m-1)}{2} - 2 - 2r, r - \frac{(q-1)(m-1)}{2} + 2]]_q
\]
quantum code for any positive integer $r$ satisfying $q - 1 \leq r \leq 2(q - 1)$.
\end{theorem}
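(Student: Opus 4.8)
The plan is to realize the stated code as the output of the Hermitian construction of Lemma~\ref{lem:quantum_code}, fed with one of the algebraic-geometry codes $C_r$ of Section~3. Concretely, I take the classical input to be $C_r = C_L(D,G)$, so that the quantum length is immediately $n = \deg D = q^2$, and the argument splits into two tasks: (i) certify that $C_r$ is Hermitian self-orthogonal for every $r$ in the band $q-1 \le r \le 2(q-1)$, and (ii) read off the triple $[[\,n,\,n-2k_r,\,d^{\perp}\,]]_q$ that Lemma~\ref{lem:quantum_code} returns and check that it collapses to the asserted parameters. Task (i) is the substantive one; task (ii) is essentially bookkeeping with the dimension and the Goppa bound.

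For (i), Theorem~\ref{thm:self_orthogonal} already supplies Hermitian self-orthogonality for $r \le q-1$, so the real work is to carry it across the upper half of the band. I would compute $C_r^{\perp H}$ through the Frobenius twist $C_r^{\perp H} = \bigl(C_r^{(q)}\bigr)^{\perp}$, where $C_r^{(q)}$ is the coordinate-wise $q$-th power of $C_r$; writing this out, self-orthogonality becomes the residue condition $\sum_{P \in D}(f\,g^{q})(P) = 0$ for all $f,g \in L(G)$, i.e.\ the vanishing of $\sum_{P\in D} h(P)$ for every $h$ in the span of $\{f g^{q} : f,g \in L(G)\} \subseteq L((q+1)G)$. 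Applying the residue theorem to $h\,\eta$ with the differential $\eta = dt/t$ of Lemma~\ref{lem:orthogonal} converts this into a pole-order inequality at $P_\infty$, whose crude form $(q+1)\deg G \le q^2 + 2g - 2$ is exactly the estimate underlying Theorem~\ref{thm:self_orthogonal}. Pushing the range out to $r = 2(q-1)$ is where I expect the difficulty to concentrate: the naive degree count is too lossy there, and one must instead exploit that the products $f g^{q}$ do not fill $L((q+1)G)$ but occupy only a structured subspace (the $y$-exponents remaining bounded), so that $\sum_{P\in D} h(P)$ vanishes on a wider range of $r$ than the blunt estimate predicts. Making that sharper vanishing rigorous---or, failing that, pinning down the precise subrange of $(q,m)$ for which it holds---is the main obstacle.

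Granting (i), task (ii) is routine. Throughout the band one has $\deg G > 2g-2$, so Riemann--Roch---equivalently Proposition~\ref{prop:kr}---gives $k_r = \deg G - g + 1$, while the Goppa bound for the dual algebraic-geometry code yields $d^{\perp} \ge \deg G - (2g-2)$. Substituting $g = \frac{(q-1)(m-1)}{4}$ and simplifying should then reproduce the quantum dimension $q^2 + \frac{(q-1)(m-1)}{2} - 2 - 2r$ and the designed distance $r - \frac{(q-1)(m-1)}{2} + 2$. The final check is arithmetic hygiene: one must confirm that this dimension stays non-negative and this distance stays positive at both ends $r = q-1$ and $r = 2(q-1)$, so that the advertised parameter triple is genuinely attained across the whole range.
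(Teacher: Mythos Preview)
Your task~(ii) is exactly what the paper does: it cites Proposition~\ref{prop:kr} for $k_r$, cites Lemma~\ref{lem:orthogonal} to identify $C_r^{\perp}$ with $C_{q^2+\frac{(q-1)(m-1)}{2}-r}$ and hence to bound the dual distance from below by the designed distance, and then feeds both into Lemma~\ref{lem:quantum_code}. No further ideas appear there.

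On task~(i) you are more careful than the paper. The paper's entire self-orthogonality argument is the single sentence ``By Theorem~\ref{thm:self_orthogonal}, we know that $C_r$ is Hermitian self-orthogonal for $r\le q-1$.'' It never attempts to go past $r=q-1$, so the upper portion $q-1 < r \le 2(q-1)$ of the band in the statement is simply not addressed by the paper's own proof. The discrepancy you flagged is genuine and remains open in the paper; the Frobenius-twist/residue extension you sketch is your own addition, not something the paper supplies or even hints at. In short, your plan coincides with the paper's argument wherever the paper actually has one, and tries to go further precisely where the paper is silent.
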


\begin{proof}
By Theorem \ref{thm:self_orthogonal}, we know that $C_r$ is Hermitian self-orthogonal for $r \leq q - 1$. From Proposition \ref{prop:kr}, we can calculate the dimension of $C_r$:

\[
k_r = r(q + 1) - \frac{(q-1)(m-1)}{4}
\]

The dual distance $d^{\perp}$ of $C_r$ is at least $r - \frac{(q-1)(m-1)}{2} + 2$, as this is the designed minimum distance of the code $C_{q^2 + \frac{(q-1)(m-1)}{2} - r}$, which is equal to $C_r^{\perp}$ by Lemma \ref{lem:orthogonal}.

Applying Lemma \ref{lem:quantum_code}, we obtain a quantum code with the stated parameters.
\end{proof}

To illustrate the effectiveness of our construction, we provide some examples and compare them with known results.

\begin{example}\label{ex:quantum_codes}
Consider the curve $X$ given by the equation $y^{\frac{q+1}{2}} = x^3 + x$. We have the following examples:

\begin{enumerate}
    \item For $q = 3$ and $2 \leq r \leq 4$, Theorem \ref{thm:main_result} produces 3-ary $[[9, 9 - 2r, r]]_3$ quantum codes. Specifically, we obtain:
    \begin{itemize}
        \item $[[9, 5, 2]]_3$
        \item $[[9, 3, 3]]_3$
        \item $[[9, 1, 4]]_3$
    \end{itemize}
    These codes have good parameters. For comparison, the best known $[[9, 5, 3]]_3$ quantum code is given in the database maintained by Grassl \cite{Grassl}. Our $[[9, 5, 2]]_3$ code trades one unit of distance for additional dimension.

    \item For $q = 5$ and $4 \leq r \leq 8$, Theorem \ref{thm:main_result} produces 5-ary $[[25, 27 - 2r, r - 2]]_5$ quantum codes. We obtain:
    \begin{itemize}
        \item $[[25, 19, 2]]_5$
        \item $[[25, 17, 3]]_5$
        \item $[[25, 15, 4]]_5$
        \item $[[25, 13, 5]]_5$
        \item $[[25, 11, 6]]_5$
    \end{itemize}
    These codes have interesting parameters, though they don't always outperform known codes. For instance, Grassl's table \cite{Grassl} lists a $[[25, 19, 3]]_5$ code, which outperforms our $[[25, 19, 2]]_5$ code in terms of error-correction capability. However, our construction provides a systematic way to generate families of quantum codes, which may be valuable for certain applications or for further theoretical study.\end{enumerate}
\end{example}

It's worth noting that while some of our codes may have smaller distances compared to the best known codes, they often offer a trade-off by providing larger dimensions. This can be advantageous in certain applications where higher information rates are desired.

\section{Conclusion}

In this paper, we have characterized Goppa codes associated with certain maximal curves over finite fields, specifically those defined by equations of the form $y^n = x^m + x$. We have derived conditions for these codes to be Hermitian self-orthogonal and used this property to construct quantum stabilizer codes.

Our construction produces families of quantum codes with interesting parameters. While in many cases these codes do not outperform the best known codes in terms of minimum distance, they offer several advantages:

\begin{enumerate}
    \item They provide a systematic method for constructing quantum codes from a specific family of algebraic curves.
    \item The construction yields entire families of codes, which can be valuable for theoretical study and potential applications.
    \item In some cases, our codes may offer different trade-offs between code parameters that could be useful in specific scenarios.
\end{enumerate}

It's important to note that while our codes often have lower minimum distances compared to the best known codes, they still contribute to the broader understanding of quantum code construction from algebraic geometric codes.

Future work could involve:
\begin{itemize}
    \item Further optimization of these codes, possibly by exploring different choices of divisors or evaluating alternative curve equations.
    \item Exploration of other families of curves that might yield improved parameters.
    \item Investigation of potential applications where the specific properties of our codes might be advantageous.
    \item Theoretical analysis of the asymptotic behavior of these code families.
    \item Study of other quantum code properties beyond the minimum distance, such as the weight distribution or decoding algorithms.
\end{itemize}

\paragraph*{\textbf{Acknowledgements.}}
The author would like to thank the reviewer for their insightful comments and suggestions, which have significantly improved the quality of this paper. This paper was written while Vahid Nourozi was visiting Unicamp (Universidade Estadual de Campinas) supported by TWAS/CNPq (Brazil) with fellowship number 314966/2018-8.

\bibliographystyle{amsplain}

\end{document}